\numberwithin{equation}{section}
\begin{document}

\newtheorem{thm}{Theorem}[section]
\newtheorem{prop}[thm]{Proposition}
\newtheorem{lem}[thm]{Lemma}
\newtheorem{cor}[thm]{Corollary}
\newtheorem{rem}[thm]{Remark}
\newtheorem*{defn}{Definition}

\newtheorem{SBT}{Theorem}
\renewcommand*{\theSBT}{\Alph{SBT}}

\newcommand{\DD}{\mathbb{D}}
\newcommand{\NN}{\mathbb{N}}
\newcommand{\ZZ}{\mathbb{Z}}
\newcommand{\QQ}{\mathbb{Q}}
\newcommand{\RR}{\mathbb{R}}
\newcommand{\CC}{\mathbb{C}}
\renewcommand{\SS}{\mathbb{S}}

\renewcommand{\theequation}{\arabic{section}.\arabic{equation}}

\newcommand{\supp}{\mathop{\mathrm{supp}}}    

\newcommand{\re}{\mathop{\mathrm{Re}}}   
\newcommand{\im}{\mathop{\mathrm{Im}}}   
\newcommand{\dist}{\mathop{\mathrm{dist}}}  
\newcommand{\link}{\mathop{\circ\kern-.35em -}}
\newcommand{\spn}{\mathop{\mathrm{span}}}   
\newcommand{\ind}{\mathop{\mathrm{ind}}}   
\newcommand{\rank}{\mathop{\mathrm{rank}}}   
\newcommand{\Fix}{\mathop{\mathrm{Fix}}}   
\newcommand{\codim}{\mathop{\mathrm{codim}}}   
\newcommand{\conv}{\mathop{\mathrm{conv}}}   
\newcommand{\epsi}{\mbox{$\varepsilon$}}
\newcommand{\eps}{\mathchoice{\epsi}{\epsi}
{\mbox{\scriptsize\epsi}}{\mbox{\tiny\epsi}}}
\newcommand{\cl}{\overline}
\newcommand{\pa}{\partial}
\newcommand{\ve}{\varepsilon}
\newcommand{\zi}{\zeta}
\newcommand{\Si}{\Sigma}
\newcommand{\cA}{{\mathcal A}}
\newcommand{\cG}{{\mathcal G}}
\newcommand{\cH}{{\mathcal H}}
\newcommand{\cI}{{\mathcal I}}
\newcommand{\cJ}{{\mathcal J}}
\newcommand{\cK}{{\mathcal K}}
\newcommand{\cL}{{\mathcal L}}
\newcommand{\cN}{{\mathcal N}}
\newcommand{\cR}{{\mathcal R}}
\newcommand{\cS}{{\mathcal S}}
\newcommand{\cT}{{\mathcal T}}
\newcommand{\cU}{{\mathcal U}}
\newcommand{\OM}{\Omega}
\newcommand{\B}{\bullet}
\newcommand{\ol}{\overline}
\newcommand{\ul}{\underline}
\newcommand{\vp}{\varphi}
\newcommand{\AC}{\mathop{\mathrm{AC}}}   
\newcommand{\Lip}{\mathop{\mathrm{Lip}}}   
\newcommand{\es}{\mathop{\mathrm{esssup}}}   
\newcommand{\les}{\mathop{\mathrm{les}}}   
\newcommand{\nid}{\noindent}
\newcommand{\pzr}{\phi^0_R}
\newcommand{\pir}{\phi^\infty_R}
\newcommand{\psr}{\phi^*_R}
\newcommand{\pow}{\frac{N}{N-1}}
\newcommand{\ncl}{\mathop{\mathrm{nc-lim}}}   
\newcommand{\nvl}{\mathop{\mathrm{nv-lim}}}  
\newcommand{\la}{\lambda}
\newcommand{\La}{\Lambda}    
\newcommand{\de}{\delta}    
\newcommand{\fhi}{\varphi} 
\newcommand{\ga}{\gamma}    
\newcommand{\ka}{\kappa}   

\newcommand{\core}{\heartsuit}
\newcommand{\diam}{\mathrm{diam}}

\newcommand{\lan}{\langle}
\newcommand{\ran}{\rangle}
\newcommand{\tr}{\mathop{\mathrm{tr}}}
\newcommand{\diag}{\mathop{\mathrm{diag}}}
\newcommand{\dv}{\mathop{\mathrm{div}}}

\newcommand{\al}{\alpha}
\newcommand{\be}{\beta}
\newcommand{\Om}{\Omega}
\newcommand{\na}{\nabla}

\newcommand{\cC}{\mathcal{C}}
\newcommand{\cM}{\mathcal{M}}
\newcommand{\nr}{\Vert}
\newcommand{\De}{\Delta}
\newcommand{\cX}{\mathcal{X}}
\newcommand{\cP}{\mathcal{P}}
\newcommand{\om}{\omega}
\newcommand{\si}{\sigma}
\newcommand{\te}{\theta}
\newcommand{\Ga}{\Gamma}

\newcommand{\pCap}{\operatorname{Cap}}

\title[Radial symmetry for $p$-harmonic functions]{Radial symmetry for $p$-harmonic functions in exterior and punctured domains}

\author{Giorgio Poggesi}
\address{Dipartimento di Matematica ed Informatica ``U.~Dini'',
Universit\` a di Firenze, viale Morgagni 67/A, 50134 Firenze, Italy.}
       \email{giorgio.poggesi@unifi.it}




\keywords{symmetry, overdetermined boundary problems, Serrin's overdetermination, $p$-Laplacian, $p$-capacitary potential}
\subjclass{Primary 35N25; Secondary 53A10, 35A23}

\begin{abstract}
We prove symmetry for the $p$-capacitary potential satisfying
\begin{equation*}
\De_p u = 0 \, \mbox{ in } \RR^N \setminus \ol{\Om} , \; u=1 \, \mbox{ on } \Ga, \; \lim_{|x|\rightarrow \infty} u(x)=0 , \; \; \; \; \; \; \; \; 1<p<N,
\end{equation*}
under Serrin's overdetermined condition
$$
| \na u| = c \mbox{ on } \Ga.
$$
Here $\Om$ is any bounded domain on which no a priori assumption is made, and $\Ga$ denotes its boundary. Our result improves on the work \cite{GS}, where the same conclusion was obtained when $\Om$ is star-shaped.
Our proof uses the maximum principle for an appropriate $P$-function, some integral identities, the isoperimetric inequality, and a Soap Bubble-type Theorem.

We then 
treat the case $1<p=N$, improving previous results present in the literature.

Finally, with analogous tools we give a new proof of symmetry for the interior overdetermined problem
\begin{equation*}
- \De_p u = K \, \de_0 \, \mbox{ in } \Om , \, u=c \, \mbox{ on } \Ga, \; \; \; \; \; \; \; \; 1<p<N,
\end{equation*} 
\begin{equation*}
| \na u| = 1  \mbox{ on } \Ga ,
\end{equation*}
in a bounded star-shaped domain $\Om$.

\end{abstract}

\maketitle

\raggedbottom

\section{Introduction}

The \textit{electrostatic $p$-capacity} of a bounded open set $\Om \subset \RR^N$, $1<p<N$, is defined by
\begin{equation}\label{def:Capacity}
\pCap_p(\Om)= \inf \left\lbrace \int_{\RR^N} | \na w|^p : \; w \in C^{\infty}_{0} (\RR^N), \; w \ge 1 \; \text{in} \; \Om \right\rbrace
\end{equation}

Under appropriate sufficient conditions, there exists a unique minimizing function $u$ of \eqref{def:Capacity}; such function is called the {\it $p$-capacitary potential} of $\Om$, and satisfies
\begin{equation}
\label{Problemcapacity}
\De_p u = 0 \, \mbox{ in } \RR^N \setminus \ol{\Om} , \; u=1 \, \mbox{ on } \Ga, \; \lim_{|x|\rightarrow \infty} u(x)=0 ,
\end{equation}
where $\Ga$ is the boundary of $\Om$ and $\De_p$ denotes the $p$-Laplace operator defined by
\begin{equation*}
\De_p u = \dv \left( |\na u|^{p-2} \na u \right).
\end{equation*}


It is well known that the $p$-capacity could be equivalently defined by means of the $p$-capacitary potential $u$ as
\begin{equation}\label{eq:caponboundary}
\pCap_p(\Om) = \int_{\RR^N \setminus \ol{\Om} } |\na u|^p \, dx = \int_\Ga | \na u|^{p-1} \, dS_x,
\end{equation}
where the second equality follows by integration by parts (see the proof of Lemma~\ref{lem:asymptoticcap}).

In Section \ref{sec:exterior} we consider Problem \eqref{Problemcapacity} under Serrin's overdetermined condition given by
\begin{equation}\label{eq:overdetermination}
| \na u| = c  \mbox{ on } \Ga,
\end{equation}
and we prove the following result.

\begin{thm}\label{thm:Serrinesterno}
Let $\Om \subset \RR^N$ be a bounded domain (i.e., a connected bounded open set).
For $1<p<N$, the problem \eqref{Problemcapacity}, \eqref{eq:overdetermination} admits a weak solution if and only if $\Om$ is a ball.
\end{thm}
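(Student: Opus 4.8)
The plan is to prove the "only if" direction (the "if" direction is immediate, since on a ball the capacitary potential is radial and has constant gradient on the boundary). So I would assume a weak solution $u$ of \eqref{Problemcapacity}, \eqref{eq:overdetermination} exists on a general bounded domain $\Om$, and aim to conclude $\Om$ is a ball. Following the abstract's roadmap, the key device is a $P$-function — an auxiliary quantity built from $u$ and $|\na u|$ that is $p$-subharmonic (or satisfies a suitable elliptic inequality) on the exterior domain, so that the maximum principle forces it to attain its extremum on $\Ga$ or at infinity. I would first establish interior regularity of $u$ and the asymptotic behavior of $u$ and $\na u$ near infinity (the latter presumably provided by a lemma like \texttt{lem:asymptoticcap}), which is needed both to control the $P$-function at infinity and to justify the integration by parts in the integral identities.

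The heart of the argument is then to combine three ingredients. First, I would derive an integral identity (a Pohozaev/Rellich-type identity, or an integral version of the $P$-function inequality) that holds with equality precisely when $u$ has a very rigid form. Second, I would invoke the isoperimetric inequality relating the surface measure of $\Ga$ to the volume it encloses. Third, I would feed the overdetermined condition $|\na u| = c$ on $\Ga$ into these identities. The overdetermination collapses the boundary integrals to expressions depending only on the geometry of $\Ga$ (such as $|\Ga|$, the enclosed volume, and the integral of mean curvature), and the chain of inequalities — $P$-function maximum principle together with isoperimetry — should be forced into equality. Equality in the maximum principle for the $P$-function gives that $|\na u|$ and $u$ are functionally related in a way consistent only with radial symmetry, and this is where the Soap Bubble-type Theorem enters: equality in the relevant integral inequality characterizes $\Ga$ as a sphere, i.e. $\Om$ is a ball.

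Concretely, I would proceed in the following order. (1) Set up the $P$-function $P = |\na u|^{p} \,\alpha(u) + \beta(u)$ for a suitable choice of the functions $\alpha,\beta$ (tuned so that $P$ is constant for the model radial solution), and show via the maximum principle that $P$ is bounded by its boundary value, which \eqref{eq:overdetermination} makes explicit. (2) Establish the integral identity that pairs naturally with $P$; integrating $P$ against a suitable vector field or weight and using the divergence theorem converts the pointwise inequality into a global one. (3) Apply the isoperimetric inequality to the boundary term. (4) Show that the overdetermined condition forces all inequalities to be equalities simultaneously. (5) Invoke the Soap Bubble-type Theorem to conclude $\Ga$ is a sphere, hence $\Om$ is a ball, and note that the point where $P$ is constant pins down $u$ as the radial potential.

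The main obstacle I anticipate is handling the low regularity inherent to the $p$-Laplacian: solutions are only $C^{1,\alpha}$ in general, and $\na u$ may vanish at critical points where the equation degenerates, so the $P$-function and the pointwise maximum principle must be justified carefully (e.g.\ away from the critical set, or in a suitable weak/viscosity sense), and the integral identities must be shown to hold despite the possible presence of a singular set. A second delicate point is controlling the behavior at infinity: the asymptotics of $u$ and $\na u$ must be sharp enough that no boundary term at infinity survives in the integration by parts, and that the $P$-function does not attain a spurious maximum near infinity. Making the equality analysis rigorous — ensuring that equality in the isoperimetric inequality and in the $P$-function bound genuinely transfer to the conclusion that $\Ga$ is a round sphere — is where the Soap Bubble-type Theorem does the essential work and where the argument must be assembled with care.
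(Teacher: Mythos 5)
Your toolbox is the right one --- regularity, asymptotics at infinity, a $P$-function with a maximum principle, a Pohozaev-type identity, the isoperimetric inequality, and the Soap Bubble-type Theorem all appear in the paper's proof --- but the way you propose to assemble them misses the one step that actually closes the argument. The paper does \emph{not} force the chain of inequalities into equality, nor does it analyze the equality case of the maximum principle for $P$ (i.e.\ $P\equiv$ const, which would give the functional relation between $|\na u|$ and $u$ you describe). Instead: the Pohozaev identity \eqref{eq:Poho} together with \eqref{eq:caponboundary} pins down $c=\frac{N-p}{p-1}H_0$ and $\pCap_p(\Om)$ in terms of $|\Ga|$ and $|\Om|$; the isoperimetric inequality is used only to show $\lim_{|x|\to\infty}P(x)\le P_{|\Ga}$, so that the maximum of $P=|\na u|^p u^{-p(N-1)/(N-p)}$ is attained on $\Ga$ and not at infinity; this gives the \emph{one-sided} boundary condition $P_\nu\le 0$ on $\Ga$. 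The decisive computation --- absent from your plan --- is then the explicit evaluation of $P_\nu$ on $\Ga$ using $\na u=u_\nu\nu$, the identity $u_{\nu\nu}=-\frac{N-1}{p-1}Hu_\nu$ coming from $p$-harmonicity, and the known value of $c$, which converts $P_\nu\le0$ into the pointwise mean-curvature inequality $H\ge H_0$ on $\Ga$. That inequality is exactly the hypothesis of the Soap Bubble-type Theorem, which then yields sphericity with no equality-forcing needed.

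This matters beyond bookkeeping, because the mechanism you substitute --- ``show that the overdetermined condition forces all inequalities to be equalities simultaneously'' --- is essentially the Garofalo--Sartori route: there one multiplies the curvature inequality by $\langle x-z,\nu\rangle$ and uses Minkowski's identity to force equality, which requires $\langle x-z,\nu\rangle\ge0$, i.e.\ star-shapedness (this is precisely how the paper argues for the \emph{interior} problem, Theorem \ref{thm:Serrininterno}, where star-shapedness is assumed). For a general bounded domain there is no obvious way to force equality in the isoperimetric inequality or in the maximum principle directly, and the whole point of Theorem \ref{thm:Serrinesterno} is to remove the star-shapedness hypothesis by letting the Soap Bubble-type Theorem absorb the one-sided inequality $H\ge H_0$. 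So as written your step (4) would either fail or silently reintroduce the assumption the theorem is designed to drop; you need to replace it with the boundary normal-derivative computation of $P$ on $\Ga$.
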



By a weak solution in the statement of Theorem \ref{thm:Serrinesterno} we mean a function $u \in W_{loc}^{1,p} \left( \RR^N \setminus \ol{\Om} \right)$ such that
$$
\int_{\RR^N \setminus \ol{\Om}} |\na u|^{p-2} \, \na u \cdot \na \phi \, dx=0
$$
for every $\phi \in C_0^\infty \left( \RR^N \setminus \ol{\Om} \right)$
and satisfying the boundary conditions in the weak sense, i.e.: for all $\ve >0$ there exists a neighborhood $U_\ve \supset \Ga$ such that $|u(x)- 1| < \ve$ and $\left| |\na u| - c \right| < \ve$ for a.e. $x \in U_\ve \cap \left( \RR^N \setminus \ol{\Om} \right)$.

Notice the complete absence in Theorem \ref{thm:Serrinesterno} of any smoothness assumption as well as of any other assumption on the domain $\Om$.

Under the additional assumption that $\Om$ is star-shaped, Theorem \ref{thm:Serrinesterno} has been proved by Garofalo and Sartori in \cite{GS}, by extending to the case $1<p<N$ the tools developed in the case $p=2$ by Payne and Philippin (\cite{PP}, \cite{Ph}).
The proof in \cite{GS}, which combines integral identities and a maximum principle for an appropriate $P$-function, bears a resemblance to Weinberger's proof (\cite{We}) of symmetry for the archetype {\it torsion problem}
\begin{equation}\label{eq:torsionproblem}
\De \tau =N \text{ in } \Om, \quad \tau =0 \text{ on } \Ga ,
\end{equation}
under Serrin's overdetermined condition
\begin{equation}\label{eq:torsionoverdet}
| \na \tau|= const. \text{ on }  \Ga.
\end{equation}

%
%
%
%

To prove Theorem \ref{thm:Serrinesterno}, we improve on the arguments used in \cite{GS} and we exploit a new crucial ingredient, that is the following Soap Bubble-type Theorem proved via integral identities by Magnanini and the author in \cite[Theorem 2.2]{MP} (see also \cite[Theorem 2.4]{MP2}).
By $| \Om|$ and $| \Ga|$ we will denote the $N$-dimensional Lebesgue measure of $\Om$ and the surface measure of $\Ga$.
\begin{SBT}[\cite{MP}, \cite{MP2}]\label{provaSBT}
Let $\Ga$ be a $C^2$-surface, which is the boundary of a bounded domain $\Om \subset \RR^N$ and let $H_0$ be the constant defined by
\begin{equation}\label{def:H0}
H_0 = \frac{| \Ga|}{N | \Om|}.
\end{equation}
Let $\tau$ be the solution of the torsion problem \eqref{eq:torsionproblem}.
If the mean curvature $H$ of $\Ga$ satisfies the inequality
$$\int_\Ga (H_0 - H) \,  |\na \tau |^2  \, dS_x \le 0 ,$$
then $\Ga$ must be a sphere (and hence $\Om$ is a ball) of radius $\frac{1}{H_0}$.
In particular, the same conclusion holds if
\begin{equation*}
H \ge H_0 \text{ on } \Ga .
\end{equation*}
\end{SBT}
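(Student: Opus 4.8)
The plan is to deduce the theorem from the single unconditional inequality $\int_\Ga (H_0-H)\,|\na\tau|^2\,dS_x\ge 0$, valid for \emph{every} such domain: once this is established, the hypothesis $\int_\Ga (H_0-H)\,|\na\tau|^2\,dS_x\le 0$ forces equality throughout a chain of sharp inequalities, and the equality cases are rigid enough to force a ball. The ingredients are purely integral-identity based --- Reilly's identity for $\tau$, the pointwise Newton (Cauchy--Schwarz) inequality for the Hessian, the divergence theorem, and the Cauchy--Schwarz inequality on $\Ga$. Throughout I would use that $\tau\equiv 0$ on $\Ga$, so the tangential gradient of $\tau$ vanishes and $|\na\tau|^2=\tau_n^2$ on $\Ga$, where $\tau_n=\pa\tau/\pa n$ with $n$ the outer unit normal.

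First I would invoke Reilly's identity for $\tau$. Since $\tau$ is constant on $\Ga$, the Laplace--Beltrami and second fundamental form contributions vanish, leaving
\[
\int_\Om\bigl[(\De\tau)^2-|\na^2\tau|^2\bigr]\,dx=(N-1)\int_\Ga H\,\tau_n^2\,dS_x,
\]
which with $\De\tau=N$ gives $\int_\Om|\na^2\tau|^2\,dx=N^2|\Om|-(N-1)\int_\Ga H\,\tau_n^2\,dS_x$. Next, the pointwise Newton inequality $|\na^2\tau|^2\ge (\De\tau)^2/N=N$, with equality at a point exactly when the Hessian is a multiple of the identity, yields the nonnegative deficit $\int_\Om\bigl(|\na^2\tau|^2-N\bigr)\,dx\ge 0$; substituting the Reilly identity and dividing by $N-1>0$ rearranges this to $\int_\Ga H\,\tau_n^2\,dS_x\le N|\Om|$. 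Independently, the divergence theorem gives $\int_\Ga \tau_n\,dS_x=\int_\Om\De\tau\,dx=N|\Om|$ (with $\tau_n>0$ by Hopf), and Cauchy--Schwarz on $\Ga$ gives $(N|\Om|)^2\le |\Ga|\int_\Ga\tau_n^2\,dS_x$; multiplying through by $H_0=|\Ga|/(N|\Om|)$ produces $\int_\Ga H_0\,\tau_n^2\,dS_x\ge N|\Om|$. Subtracting the two boundary bounds yields the desired unconditional inequality $\int_\Ga(H_0-H)\,\tau_n^2\,dS_x\ge 0$.

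Combining this with the hypothesis forces every inequality above to be an equality. Equality in Newton's inequality forces $\na^2\tau\equiv I$ on $\Om$, whence $\tau(x)=\tfrac12|x-x_0|^2+c$ for some $x_0\in\RR^N$ and $c\in\RR$; its zero level set $\Ga$ is therefore a sphere and $\Om$ a ball, while the definition of $H_0$ pins the radius to $1/H_0$ (the concurrent equality $\tau_n\equiv\text{const}$ in the boundary Cauchy--Schwarz is then automatically consistent). The ``in particular'' clause is immediate, since $H\ge H_0$ makes the integrand pointwise nonpositive.

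I expect the main obstacle to be analytic rather than structural: justifying Reilly's identity and the boundary integrals at the stated $C^2$ regularity of $\Ga$ (the torsion function is smooth inside but only as regular as the boundary allows up to $\Ga$), together with carefully keeping the two sharp inequalities --- the pointwise Newton inequality and the boundary Cauchy--Schwarz --- oriented so that they combine into the single unconditional estimate. Once these are secured, the rigidity step is essentially immediate from the equality case $\na^2\tau\equiv I$.
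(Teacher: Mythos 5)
Your argument is correct and is essentially the proof given in the cited sources \cite{MP}, \cite{MP2} (the paper itself imports this theorem without reproving it): Reilly's identity for the torsion function, the Cauchy--Schwarz/Newton inequality for the Hessian, the divergence theorem and Cauchy--Schwarz on $\Ga$, combined into the unconditional bound $\int_\Ga (H_0-H)|\na\tau|^2\,dS_x\ge 0$, with rigidity coming from $\na^2\tau\equiv I$. The only cosmetic difference is that \cite{MP} packages the two nonnegative deficits into a single exact identity (which also yields stability estimates), whereas you obtain the inequality by adding them, which suffices for the symmetry statement.
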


%


Symmetry for Problem \eqref{Problemcapacity}, \eqref{eq:overdetermination} was first obtained by Reichel (\cite{Re1}, \cite{Re2}) by adapting the {\it method of moving planes} introduced by Serrin (\cite{Se}) to prove symmetry for the overdetermined torsion problem \eqref{eq:torsionproblem}, \eqref{eq:torsionoverdet}.
In Reichel's works (\cite{Re1}, \cite{Re2}) the star-shapedness assumption is not requested, but the domain $\Om$ is a priori assumed to be $C^{2,\al}$ and the solution $u$ is assumed to be of class $C^{1, \al}(\RR^N \setminus \Om)$.

For completeness let us mention that many alternative proofs and improvements in various directions of symmetry results for Serrin's problems relative to the equations \eqref{eq:torsionproblem} and \eqref{Problemcapacity} have been obtained in the years and can be found in the literature: for the overdetermined torsion problem \eqref{eq:torsionproblem}, \eqref{eq:torsionoverdet} see for example \cite{PS}, \cite{GL}, \cite{DP}, \cite{BH}, \cite{BNST}, \cite{FK}, \cite{CiS}, \cite{WX}, \cite{MP2}, \cite{BC}, and the surveys \cite{Mag}, \cite{NT}, \cite{Ka}; for the exterior overdetermined problem \eqref{Problemcapacity}, \eqref{eq:overdetermination} where the domain $\Om$ is assumed to be convex see \cite{MR}, \cite{BCS} and \cite{BC}.


In Section \ref{sec:casoconforme}, we establish the result corresponding to Theorem \ref{thm:Serrinesterno} in the special case $1<p=N$.
In this case, the problem corresponding to \eqref{Problemcapacity} is (see e.g. \cite{CC}):
\begin{equation}
\label{ConformalProblemcapacity}
\De_N u = 0 \, \mbox{ in } \RR^N \setminus \ol{\Om} , \; u=1 \, \mbox{ on } \Ga, \; u(x) \sim - \ln{|x|}  \text{ as } |x|\to \infty ,
\end{equation}
where $\sim$ means that
\begin{equation}\label{eq:confasymppre}
c_1 \le \frac{u(x)}{ ( - \ln{|x|} ) } \le c_2, \text{ as } |x| \to \infty
\end{equation}
for some positive constants $c_1$,$c_2$.
What we prove is the following.
\begin{thm}\label{thm:conforme}
Let $\Om \subset \RR^N$, $1<N$, be a bounded domain.
The problem \eqref{ConformalProblemcapacity} with the overdetermined condition \eqref{eq:overdetermination}
admits a weak solution if and only if $\Om$ is a ball.
\end{thm}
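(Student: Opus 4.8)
The converse implication is immediate: on the exterior of a ball $B_R$ the explicit function $u(x) = 1 - \log(|x|/R)$ solves \eqref{ConformalProblemcapacity} and satisfies $|\na u| = 1/R$ on $\Ga$, so \eqref{eq:overdetermination} holds with $c = 1/R$. For the direct implication the plan is to run the same scheme as for Theorem~\ref{thm:Serrinesterno}, taking advantage of the fact that $p=N$ is the \emph{conformal} exponent, for which the fundamental solution of $\De_N$ is logarithmic. The decisive structural gain is that in the Rellich--Pohozaev identity for $\De_N u = 0$ the interior term $\tfrac{N-p}{p}\int|\na u|^p$ carries the factor $N-p$ and therefore \emph{vanishes}; this makes two integral identities close up by themselves and pin down the isoperimetric ratio of $\Om$ exactly.

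First I would upgrade the one-sided growth bound \eqref{eq:confasymppre} to a genuine asymptotic expansion. Comparing $u$ with the fundamental solution $-\log|x|$, and if needed using a gradient bound coming from a maximum principle for an exponential-type $P$-function such as $|\na u|\,e^{(1-u)/\gamma}$ (which is constant on the model solution), I would show that there is a single constant $\gamma>0$ with
\begin{equation*}
u(x) = -\gamma\log|x| + O(1), \qquad \na u(x) = -\gamma\,\frac{x}{|x|^2} + o\!\left(\frac{1}{|x|}\right) \quad\text{as } |x|\to\infty .
\end{equation*}
The coefficient $\gamma$ is not free: integrating $\De_N u = 0$ over $B_\rho\setminus\ol\Om$ and letting $\rho\to\infty$ gives conservation of the $N$-harmonic flux, which together with $|\na u|=c$ on $\Ga$ yields the $p=N$ analogue of \eqref{eq:caponboundary},
\begin{equation*}
\gamma^{N-1}\,|\SS^{N-1}| = \int_\Ga |\na u|^{N-1}\,dS_x = c^{N-1}\,|\Ga| .
\end{equation*}

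The second identity is the conformal Rellich--Pohozaev identity, obtained by testing $\De_N u = 0$ against $x\cdot\na u$ on $B_\rho\setminus\ol\Om$. Since the bulk term vanishes for $p=N$, the identity reduces to a balance between the boundary integral over $\Ga$ and the one over $\pa B_\rho$. On $\Ga$ one uses $\na u = -c\,\nu$ together with the divergence theorem $\int_\Ga x\cdot\nu\,dS_x = N|\Om|$; on $\pa B_\rho$ one inserts the expansion above, and---this is special to $p=N$---the integrand tends to a finite nonzero limit rather than to zero. Passing to the limit $\rho\to\infty$ I expect to obtain
\begin{equation*}
\gamma^N\,|\SS^{N-1}| = N\,c^N\,|\Om| .
\end{equation*}
Eliminating $c$ and $\gamma$ between the last two displays gives $|\Ga|^N = N^{N-1}\,|\SS^{N-1}|\,|\Om|^{N-1}$, which is exactly the equality case of the isoperimetric inequality. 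Its rigidity then forces $\Om$ to be a ball, completing the proof. In this conformal case the identities close directly, so I expect the Soap Bubble Theorem~\ref{provaSBT} needed for $1<p<N$ to be bypassed.

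I expect the main difficulty to be analytic rather than geometric. Unlike the case $1<p<N$, where $u$ and $\na u$ decay as powers and every boundary integral over $\pa B_\rho$ simply disappears in the limit, here the logarithmic growth makes the relevant quantities $O(1)$, so the limits at infinity must be computed exactly and the $o(1/|x|)$ control of $\na u$ is essential. Equally delicate is justifying the flux and Pohozaev identities for a merely weak solution on a domain carrying no a priori regularity: I would argue on the smooth level sets $\{u=t\}$, available for a.e.\ $t$ by Sard's theorem, and on exhausting balls, and then pass to the limit, invoking the isoperimetric inequality and its rigidity in the class of sets of finite perimeter so as to conclude without assuming $\Ga$ smooth.
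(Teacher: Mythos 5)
Your proposal follows essentially the same route as the paper: the divergence-free Pohozaev field $N\langle x,\nabla u\rangle|\nabla u|^{N-2}\nabla u-|\nabla u|^{N}x$ (whose bulk term vanishes precisely because $p=N$) combined with the flux identity $c^{N-1}|\Gamma|=\gamma^{N-1}|\mathbb{S}^{N-1}|$ pins down the isoperimetric ratio, and rigidity of the isoperimetric inequality concludes; your two displayed identities and the resulting equality $|\Gamma|^{N}=N^{N-1}|\mathbb{S}^{N-1}|\,|\Omega|^{N-1}$ match the paper's computation exactly. The only difference is in the preliminaries, where the paper obtains the logarithmic expansion directly from Kichenassamy--V\'eron and settles the regularity of $\Gamma$ via Vogel's free-boundary results (so all boundary integrals are classical), rather than through your proposed exponential $P$-function and Sard/finite-perimeter workarounds.
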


A proof of Theorem \ref{thm:conforme} that uses the method of moving planes is contained in \cite{Re2}, under the additional a priori smoothness assumptions $\Ga \in C^{2, \al}$, $u \in C^{1,\al}(\RR^N \setminus \Om)$.
Our proof via integral identities seems to be new and cannot be found in the literature unless for the classical case $p=N=2$, which has been treated with similar arguments in \cite{Mar} (for piecewise smooth domains) and \cite{MR} (for Lipschitz domains). Moreover, in Theorem \ref{thm:conforme} no assumptions on the domain $\Om$ are made.

We mention that a related symmetry result for the $N$-capacitary potential in a bounded (smooth) star-shaped ring domain has been established in \cite{PP2}.


In Section \ref{sec:Interior}, we show how the same ideas used in our proof of Theorem \ref{thm:Serrinesterno} can be adapted to give a symmetry result for a similar problem in a bounded punctured domain. More precisely, we prove the following theorem concerning the problem
\begin{equation}\label{Pdelta}
- \De_p u = K \, \de_0 \, \mbox{ in } \Om , \, u=c \, \mbox{ on } \Ga,
\end{equation} 
under Serrin's overdetermined condition 
\begin{equation}\label{eq:interno-overdetermination}
| \na u| = 1  \mbox{ on } \Ga ,
\end{equation}
where with $\de_0$ we denote the Dirac delta centered at the origin $0 \in \Om$ and $K$ is some positive normalization constant.


\begin{thm}
\label{thm:Serrininterno}
Let $\Om \subset \RR^N$ be a bounded star-shaped domain.
For $1<p<N$, the problem \eqref{Pdelta}, \eqref{eq:interno-overdetermination}
admits a weak solution if and only if $\Om$ is a ball centered at the origin.
\end{thm}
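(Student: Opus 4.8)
The \emph{if} direction is explicit: on a ball $B_R$ centered at the origin, problem \eqref{Pdelta}, \eqref{eq:interno-overdetermination} is solved by the radial function $u(x)=c+a\bigl(|x|^{-\frac{N-p}{p-1}}-R^{-\frac{N-p}{p-1}}\bigr)$, where $a>0$ is fixed by the normalization of the Dirac mass and the radius $R$ is then determined by imposing $|\na u|=1$ on $\pa B_R$; one checks these constraints are compatible. I shall therefore concentrate on the \emph{only if} implication, following the scheme used for Theorem \ref{thm:Serrinesterno}.

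First I would record the structure of $u$. Away from the pole $u$ is a positive $p$-harmonic function, hence locally $C^{1,\al}$, and the maximum principle gives $u\ge c$ in $\Om$ with equality exactly on $\Ga$, so that $\na u=-\nu$ on $\Ga$; near the origin $u$ matches the fundamental solution, $u(x)\sim a\,|x|^{-\frac{N-p}{p-1}}$. Integrating $\De_p u=0$ over $\Om\setminus B_\ve$ and letting $\ve\to0$, the flux through $\pa B_\ve$ reproduces the Dirac mass while the flux through $\Ga$ equals $-|\Ga|$ (since $|\na u|^{p-2}\na u\cdot\nu=-1$ there), which yields the normalization $K=|\Ga|$. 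The star-shapedness of $\Om$ with respect to the pole enters both in guaranteeing enough boundary regularity to run the argument and through the sign $x\cdot\nu\ge0$ on $\Ga$.

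Next I would introduce the $P$-function adapted to \eqref{Pdelta}, of the Garofalo--Sartori type $P=|\na u|^{\frac{p}{p-1}}\,g(u)$, with $g$ chosen so that $P$ is constant when $u$ is the radial solution, and show that $P$ is a subsolution of the operator linearizing the $p$-Laplacian on $\Om\setminus\{0\}$; analysing the behaviour at the pole shows that $P$ attains its maximum on $\Ga$, where it equals a known constant because $u=c$ and $|\na u|=1$. In parallel I would derive a Rellich--Pohozaev identity by testing $\De_p u=0$ with $x\cdot\na u$ on $\Om\setminus B_\ve$: on $\Ga$ the boundary term collapses, via $\na u=-\nu$ and $\int_\Ga x\cdot\nu\,dS_x=N|\Om|$, to $(1-p)N|\Om|$, while on $\pa B_\ve$ the divergent contributions match exactly the divergence of $(N-p)\int_{\Om\setminus B_\ve}|\na u|^p\,dx$ (they cancel because $(N-p)/\al=p-1$, with $\al=\frac{N-p}{p-1}$), leaving a finite identity linking a regularized Dirichlet energy, $K=|\Ga|$ and $|\Om|$.

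Finally I would combine the pointwise bound $P\le P|_\Ga$ with the Pohozaev identity and the isoperimetric inequality $|\Ga|\ge N\,\om_N^{1/N}\,|\Om|^{\frac{N-1}{N}}$; as in Theorem \ref{thm:Serrinesterno}, all inequalities are forced to be equalities, producing the curvature condition $\int_\Ga(H_0-H)\,|\na\tau|^2\,dS_x\le0$ required by Theorem \ref{provaSBT}, whence $\Ga$ is a sphere and $\Om$ a ball. That the ball is centered at the origin then follows because equality in the $P$-function step forces $u$ to be radial about the pole---equivalently, a $p$-harmonic function singular at $0$ can have $|\na u|$ constant on a sphere only if $0$ is its center. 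I expect the genuine difficulty to be the analysis at the pole: upgrading the boundary regularity enough to invoke Theorem \ref{provaSBT}, and, above all, justifying the Rellich--Pohozaev identity in the presence of the non-integrable singularity of $|\na u|^p$ at the origin, i.e.\ showing rigorously that the divergent volume and boundary terms cancel and isolating the correct finite remainder. Verifying that the maximum principle for $P$ survives the singularity is the other delicate ingredient.
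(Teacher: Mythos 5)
Your setup matches the paper's: the $P$-function \eqref{def:P} with the Garofalo--Sartori strong maximum principle on $\Om\setminus\{0\}$, the normalization $K=|\Ga|$, the Kichenassamy--V\'eron asymptotics at the pole, and the comparison $\lim_{|x|\to 0}P(x)\le P_{|\Ga}$ via the isoperimetric inequality, forcing $P$ to attain its maximum on $\Ga$. But your concluding step has a sign problem that is a genuine gap. In the interior problem the outer normal $\nu$ points \emph{toward} the set where $P$ is maximal, so the boundary information is $P_\nu\ge 0$, and the same computation as in the exterior case then yields $H\le H_0$ on $\Ga$ --- the \emph{reverse} of the hypothesis $\int_\Ga(H_0-H)|\na\tau|^2\,dS_x\le 0$ of Theorem \ref{provaSBT}. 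So the curvature condition you claim to ``produce'' is exactly the one you do not get, and no amount of combining with a Pohozaev identity and the isoperimetric inequality is shown to reverse it. The missing idea --- and the real reason star-shapedness is assumed --- is Minkowski's identity: since $\langle x-z,\nu\rangle\ge 0$ on $\Ga$ for a star center $z$, multiplying $H\le H_0$ by $\langle x-z,\nu\rangle$ and integrating gives $|\Ga|=\int_\Ga H\langle x-z,\nu\rangle\,dS_x\le H_0\int_\Ga\langle x-z,\nu\rangle\,dS_x=H_0\,N|\Om|=|\Ga|$, which forces $H\equiv H_0$ pointwise; one then concludes by Alexandrov's Soap Bubble Theorem. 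Your proposal uses $x\cdot\nu\ge 0$ only inside the Pohozaev identity, which is not where it is needed.

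A secondary point: the Rellich--Pohozaev identity, which you single out (correctly) as delicate because $|\na u|^p$ is not integrable at the pole, is not needed at all. Since $u$ is determined only up to an additive constant, the value $c$ on $\Ga$ is free and can simply be \emph{chosen} as $c=\frac{p-1}{N-p}\cdot\frac{N|\Om|}{|\Ga|}$, which is what makes $P_{|\Ga}$ comparable to $\lim_{|x|\to 0}P(x)$ via the isoperimetric inequality; in the exterior problem the Pohozaev identity served precisely to \emph{compute} the analogous constant, and here that role is played by the free normalization. Dropping the Pohozaev step removes the divergence-matching difficulty you anticipate, and replacing your final step by the Minkowski-identity argument closes the proof.
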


It should be noticed, however, that in Theorem \ref{thm:Serrininterno} we need to assume $\Om$ to be star-shaped, restriction that is not present in the proofs of Payne and Schaefer (\cite{PS})(for the case $p=2$), Alessandrini and Rosset (\cite{AR}), and Enciso and Peralta-Salas (\cite{EP}).
We mention that the proof in \cite{AR} uses an adaptation of the method of moving planes, the proof in \cite{EP} is in the wake of Weinberger, and both of them also cover the special case $p=N$.



For $p=2$, Problem \eqref{Problemcapacity} arises naturally in electrostatics. In this context $u$ is the (normalized) potential of the electric field $\na u$ generated by a conductor $\Om$.
We recall that when $\Om$ is in the electric equilibrium, the electric field in the interior of $\Om$ is null, and hence the electric potential $u$ is constant in $\Om$ (i.e. $u \equiv 1$ in $\Om$); moreover, the electric charges present in the conductor are distributed on the boundary $\Ga$ of $\Om$.
It is also known that the electric field $\na u$ on $\Ga$ is orthogonal to $\Ga$ -- i.e. $\na u = u_\nu \, \nu$, where here as in the rest of the paper $\nu$ denotes the outer unit normal with respect to $\Om$ and $u_\nu$ denotes the derivative of $u$ in the direction $\nu$
-- and its intensity is given\footnote{More precisely, in free space the intensity of the electric field on $\Ga$ is given by $|\na u|_{| \Ga} = - u_\nu = \frac{\rho}{\eps_0}$, where $\rho$ is the surface charge density over $\Ga$ and $\eps_0$ is the vacuum permittivity. We ignored the constant $\eps_0$ to be coherent with the mathematical definition of capacity given in \eqref{def:Capacity}.} by the surface charge density over $\Ga$. In this context the capacity is defined as the total electric charge needed to induce the potential $u$, that is 
$$\pCap(\Om)= \int_\Ga | \na u | \, dS_x, $$
in accordance with \eqref{eq:caponboundary} for $p=2$.
Following this physical interpretation Theorem \ref{thm:Serrinesterno} simply states that the electric field on the boundary $\Ga$ of the conductor $\Om$ is constant - or equivalently that the charges present in the conductor are uniformly distributed on $\Ga$ (i.e. the surface charge density is constant over $\Ga$)- if and only if $\Om$ is a round ball.


Another result of interest in the same context that is related to Problem \eqref{Problemcapacity}, \eqref{eq:overdetermination} is a Poincar\'e's theorem known as the isoperimetric inequality for the capacity, stating that, among sets having given volume, the ball minimizes $\pCap(\Om)$.
We mention that a proof of this inequality that hinges on rearrangement techniques can be found in \cite[Section 1.12]{PZ} (see also \cite{Ja} for a useful review of that proof).
Here, we just want to underline the strong relation present between this result and Problem \eqref{Problemcapacity}, \eqref{eq:overdetermination} (for $p=2$).
In fact, once that the existence of a minimizing set $\Om_0$ is established, we can show through the technique of {\it shape derivative} that the solution of \eqref{Problemcapacity} in $\Om_0$ also satisfies the overdetermined condition \eqref{eq:overdetermination} on $\Ga_0=\pa \Om_0$; the reasoning is the following.


We consider the evolution of the domains $\Om_t$ given by
\begin{equation*}
\Om_t = \cM_t (\Om), 
\end{equation*}
where $\Om= \Om_0$ is fixed and $\cM_t: \RR^N \to \RR^N$ is a mapping such that
$$
\cM_0 (x) =x, \; \cM'_0 (x) = \phi(x) \nu(x),
$$
where the symbol $'$ means differentiation with respect to $t$, $\phi$ is any compactly supported continuous function, and $\nu$ is a proper extension of the unit normal vector field to a tubular neighborhood of $\Ga_0$ (for instance $\nu(x)= \na \de_{\Ga_0} (x)$, where $\de_{\Ga_0} (x)$ is the distance of $x$ from $\Ga_0$).
Thus, we consider $u(t,x)$, solution of Problem \eqref{Problemcapacity} in $\Om= \Om_t$, and the two functions (in the variable $t$) $\pCap (\Om_t)$ and $| \Om_t|$.
%
%
%
Since $\Om_0$ is the domain that minimizes $\pCap(\Om_t)$ among all the domains in the one-parameter family $\left\lbrace \Om_t \right\rbrace_{t \in \RR}$ that have prescribed volume $|\Om_t|=V$, by using the method of {\it Lagrange multipliers} and {\it Hadamard's variational formula} (see \cite[Chapter 5]{HP}), standard computations lead to prove that there exists a number $\la$ such that
\begin{equation*}
\int_{\Ga_0} \phi(x) \left[ u_\nu^2 (x) - \la \right] \, dS_x = 0,
\end{equation*}
where we have set $u(x)=u(0,x)$.
Since $\phi$ is arbitrary, we deduce that $u_\nu^2 \equiv \la$ on $\Ga_0$, that is, $u$ satisfies the overdetermined condition \eqref{eq:overdetermination} on $\Ga_0$.



Other applications of Problem \eqref{Problemcapacity} are related to quantum theory, acoustic, theory of musical instruments, and the study of heat, electrical and fluid flow (see for example \cite{CFG}, \cite{DZH}, \cite{BC} and references therein).

Finally, we just mention that also Problem \eqref{Pdelta} arises in electrostatics for $p=2$: in this case, Theorem \ref{thm:Serrininterno} states that the electric field on a conducting hypersurface enclosing a charge is constant if and only if the conductor is a sphere centered at the charge.

\section{The exterior problem: proof of Theorem \ref{thm:Serrinesterno}}\label{sec:exterior}
In order to prove Theorem \ref{thm:Serrinesterno}, we start by collecting all the necessary ingredients.
In this section $u$ denotes a weak solution to \eqref{Problemcapacity},\eqref{eq:overdetermination}, in the sense explained in the Introduction, and it holds that $1<p<N$.



\begin{rem}[On the regularity]\label{rem:regularity}
{\rm
Due to the degeneracy or singularity of the $p$-Laplacian (when $p \neq 2$) at the critical points of $u$, $u$ is in general only $C^{1, \al}_{loc}(\RR^N \setminus \ol{\Om})$ (see \cite{Di}, \cite{Le}, \cite{To}), whereas it is $C^{\infty}$ in a neighborhood of any of its regular points thanks to standard elliptic regularity theory (see \cite{GT}). However, as already noticed in \cite{GS}, the additional assumption given by the weak boundary condition \eqref{eq:overdetermination} ensures that $u$ can be extended to a $C^2$-function in a neighborhood of $\Ga$, so that by using the work of Vogel \cite{Vo} we get that $\Ga$ is of class $C^2$. Thus, by \cite[Theorem 1]{Li} it turns out that $u$ is $C^{1, \al}_{loc} (\RR^N \setminus \Om)$.
As a consequence we can now interpret the boundary condition in \eqref{Problemcapacity} and the one in \eqref{eq:overdetermination} in the classical strong sense.
}
\end{rem}

\begin{rem}
{\rm
More precisely, by using Vogel's work \cite{Vo}, which is based on the deep results on free boundaries contained in \cite{AC} and \cite{ACF}, one can prove that $\Ga$ is of class $C^{2,\al}$ from each side. Even if in the present paper we do not need this refinement, it should be noticed that in light of this remark the arguments contained in \cite{Re2} give an alternative and complete proof of Theorem \ref{thm:Serrinesterno} (and also of Theorem \ref{thm:conforme}). In fact, the smoothness assumptions of \cite[Theorem 1]{Re2} are satisfied.
}
\end{rem}


By using the ideas contained in \cite{GS} together with a result of Kichenassamy and V\'eron (\cite{KV}), we can recover the following useful asymptotic expansion for $u$ as $|x|$ tends to infinity.
\begin{lem}[Asymptotic expansion]\label{lem:asymptoticcap}
As $|x|$ tends to infinity it holds that
\begin{equation}\label{eq:cap-asymptotic u}
u(x) = \frac{p-1}{N-p} \left( \frac{\pCap_p (\Om)}{\om_N} \right)^{\frac{1}{p-1}} |x|^{- \frac{N-p}{p-1}} + o (|x|^{- \frac{N-p}{p-1}}).
\end{equation}
\end{lem}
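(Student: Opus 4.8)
The plan is to extract the leading-order profile of $u$ from the analysis of isolated singularities of $p$-harmonic functions due to Kichenassamy and V\'eron, and then to fix the multiplicative constant by a flux computation linking it to $\pCap_p(\Om)$. Throughout set $\al := \frac{N-p}{p-1}$, the decay exponent of the fundamental solution of $\De_p$ on $\RR^N$, and let $\om_N = |\SS^{N-1}|$ denote the area of the unit sphere, so that $|\pa B_R| = \om_N R^{N-1}$. Since $u$ is $p$-harmonic in $\RR^N\setminus\ol\Om$ with $u=1$ on $\Ga$ and $u\to 0$ at infinity, the maximum principle gives $0<u<1$, so infinity is a genuine (non-removable) singular point. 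I would first, following \cite{GS}, sandwich $u$ between two multiples of $|x|^{-\al}$ by comparison with sub- and super-solutions built from the fundamental solution, thereby fixing the decay rate; the precise profile — crucially including the gradient — then follows from the Kichenassamy--V\'eron estimates \cite{KV} for $p$-harmonic functions at an isolated singularity (here the point at infinity, reached by a scaling argument), yielding a constant $\ga>0$ with
\begin{equation*}
u(x) = \ga\,|x|^{-\al} + o\!\left(|x|^{-\al}\right), \qquad \na u(x) = -\ga\,\al\,|x|^{-\al-2}\,x + o\!\left(|x|^{-\al-1}\right)
\end{equation*}
as $|x|\to\infty$, uniformly in $x/|x|$; in particular $\pa_r u = -\ga\al|x|^{-\al-1}(1+o(1))$ and $|\na u| = \ga\al|x|^{-\al-1}(1+o(1))$.

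Next I would exploit that $\dv(|\na u|^{p-2}\na u)=0$ in $\RR^N\setminus\ol\Om$: integrating over the annulus $B_R\setminus\ol\Om$ and applying the divergence theorem shows that
\begin{equation*}
\Phi(R) := \int_{\pa B_R} |\na u|^{p-2}\,\pa_r u\; dS_x = \int_\Ga |\na u|^{p-2}\,\pa_\nu u\; dS_x
\end{equation*}
is independent of $R$ for every $R$ with $\ol\Om\subset B_R$. On $\Ga$ one has $\na u = u_\nu\,\nu$ with $u_\nu = -|\na u|<0$ (the sign from the maximum principle), so the right-hand side equals $-\int_\Ga|\na u|^{p-1}\,dS_x$. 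The same integration by parts applied to $\int_{B_R\setminus\ol\Om}|\na u|^p\,dx = \int |\na u|^{p-2}\na u\cdot\na u\, dx$, letting $R\to\infty$ and discarding the contribution on $\pa B_R$ — which is comparable to $u|_{\pa B_R}\cdot\Phi(R)=O(R^{-\al})\to 0$ by the first paragraph — simultaneously establishes the second equality in \eqref{eq:caponboundary}, whence $-\Phi(R) = \pCap_p(\Om)$ for all large $R$.

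Finally I would substitute the asymptotic profile into $\Phi(R)$: this gives $|\na u|^{p-2}\pa_r u = -(\ga\al)^{p-1}|x|^{-(\al+1)(p-1)}(1+o(1))$ on $\pa B_R$, and the algebraic identity $(\al+1)(p-1)=N-1$ makes the integrand exactly balance the area growth $\om_N R^{N-1}$; since $\Phi(R)$ is constant in $R$, it must equal the limit of this expression, namely $-(\ga\al)^{p-1}\om_N$. Equating with $-\pCap_p(\Om)$ yields $(\ga\al)^{p-1}\om_N = \pCap_p(\Om)$, i.e.
\begin{equation*}
\ga = \frac{1}{\al}\left(\frac{\pCap_p(\Om)}{\om_N}\right)^{\frac{1}{p-1}} = \frac{p-1}{N-p}\left(\frac{\pCap_p(\Om)}{\om_N}\right)^{\frac{1}{p-1}},
\end{equation*}
which is precisely \eqref{eq:cap-asymptotic u}. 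The main obstacle is the first step: transferring the Kichenassamy--V\'eron classification to the singularity at infinity and, above all, securing the \emph{gradient} asymptotics with uniform $o$-control on $\pa B_R$ — a value-only asymptotic would be insufficient, since the constant $\ga$ is read off from the first-order (flux) quantity $\Phi(R)$. Once that profile is in hand, the remaining steps are routine divergence-theorem bookkeeping.
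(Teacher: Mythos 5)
Your proposal is correct and follows essentially the same route as the paper: sandwiching $u$ between multiples of the fundamental solution via the comparison principle, invoking the Kichenassamy--V\'eron asymptotics (including the first-order derivative estimates, which the paper takes from \cite[Remark 1.5]{KV} with $|\al|=1$) to obtain the profile with an undetermined constant, and then identifying that constant through the flux identity \eqref{eq:capintegrperasymptotic} equating the boundary integral on $\Ga$ (which is $\pCap_p(\Om)$ by \eqref{eq:caponboundary}) with the limiting flux through $\pa B_R$. The only differences are cosmetic (your $\ga$ is normalized against $|x|^{-\al}$ rather than against $\mu$), and you correctly flag the same key point the paper relies on, namely that the gradient asymptotics, not just the value asymptotics, are what make the flux computation go through.
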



The computations leading to determine the constant of proportionality in \eqref{eq:cap-asymptotic u} originate from \cite{GS}, where they have been used to give a complete proof -- which works without invoking \cite{KV} -- of a different but related result
(\cite[Theorem~3.1]{GS}), which is described in more details in Remark \ref{rem:Garofaoloasymptotic} below.
We mention that, later, in \cite{CoS} the same computations have been treated with tools of convex analysis and used together with the result contained in \cite[Remark 1.5]{KV} to prove Lemma \ref{lem:asymptoticcap} for convex sets.

\begin{proof}[Proof of Lemma \ref{lem:asymptoticcap}]
As noticed in \cite{GS}, if $u$ is a solution of \eqref{Problemcapacity}, then the weak comparison principle for the $p$-Laplacian (see \cite{HKM}) implies the existence of positive constants $c_1$,$c_2$, $R_0$ such that
$$
c_1 \mu(x) \le u(x) \le c_2 \mu(x), \quad \text{ if } |x| \ge R_0,
$$
where $\mu(x)$ denotes the radial fundamental solution of the $p$-Laplace operator given by
\begin{equation*}
\mu(x) = \frac{(p-1)}{(N-p)} \frac{1}{\om_N^{\frac{1}{p-1}}} |x|^{\frac{p-N}{p-1}}.
\end{equation*}
Thus we can apply the result of Kichenassamy and V\'eron (\cite[Remark 1.5]{KV}) and state that there exists a constant $\ga$ such that
\begin{equation}\label{eq:passKV}
\lim_{|x| \to \infty} \frac{u(x)}{\mu(x)} = \ga, \quad \lim_{|x| \to \infty} |x|^{\frac{N-p}{p-1} + | \al|}  D^\al \left( u - \ga \mu \right)=0,
\end{equation}
for all multi-indices $\al=(\al_1, \dots, \al_N)$ with $|\al|= \al_1+ \dots + \al_N \ge 1$.
To establish \eqref{eq:cap-asymptotic u} now it is enough to prove that
\begin{equation}\label{eq:0gamma}
\ga = \pCap_p (\Om)^{\frac{1}{p-1}};
\end{equation}
this can be easily done, as already noticed in \cite{GS}, through the following integration by parts that holds true by the $p$-harmonicity of $u$:
\begin{equation}\label{eq:capintegrperasymptotic}
- \int_{\Ga} |\na u|^{p-2} u_{\nu} \, dS_x = - \lim_{R \to \infty} \int_{\pa B_R} |\na u|^{p-2} u_{\nu_{B_R}} \, dS_x .
\end{equation}
Here as in the rest of the paper $\nu_{B_R}$ denotes the outer unit normal with respect to the ball $B_R$ of radius $R$, $\nu$ is the outer unit normal with respect to $\Om$, and $u_\nu$ (resp.
$u_{\nu_{B_R}}$) is the derivative of $u$ in the direction $\nu$ (resp. $\nu_{B_R}$). 
The left-hand side of \eqref{eq:capintegrperasymptotic} is exactly $\pCap_p (\Om)$ as it is clear by \eqref{eq:caponboundary} and the fact that 
\begin{equation}\label{eq:capnormagradmenodernormale}
| \na u|= - u_\nu \text{ on } \Ga ;
\end{equation}
moreover, the limit in the right-hand side of \eqref{eq:capintegrperasymptotic} can be explicitly computed by using the second equation in \eqref{eq:passKV} (with $|\al|=1$) and it turns out to be $\ga^{p-1}$. Thus, \eqref{eq:0gamma} is proved and \eqref{eq:cap-asymptotic u} follows.

For completeness, we explain here how to prove the second identity in \eqref{eq:caponboundary}: we take the limit for $R \to \infty$ of the following integration by parts made on $B_R \setminus \ol{\Om} $ and we note that the integral on $\pa B_R$ converge to zero due to
\eqref{eq:passKV}:
\begin{equation*}
\int_{B_R \setminus \ol{\Om} } |\na u|^p \, dx = \int_{\pa B_R} u | \na u|^{p-2} u_{\nu_{B_R}} \, dS_x - \int_{\Ga} |\na u|^{p-2} u_\nu \, dS_x .
\end{equation*}
The desired identity is thus proved just by recalling \eqref{eq:capnormagradmenodernormale}.
\end{proof}


It is well known that the value $c$ of $| \na u|$ on $ \Ga$ appearing in the overdetermined condition \eqref{eq:overdetermination} can be explicitly computed.

\begin{lem}[Explicit value of $c$ in \eqref{eq:overdetermination}]
The constant $c$ appearing in \eqref{eq:overdetermination} equals
\begin{equation}\label{eq:valoreunu serrinesterno}
c = \frac{N-p}{p-1} \, H_0,
\end{equation}
where $H_0$ is the constant defined in \eqref{def:H0}.
Moreover, the following explicit expression of the $p$-capacity of $\Om$ holds:
\begin{equation}\label{eq:cap in terms isop}
\pCap_p (\Om) = \left( \frac{N-p}{p-1} \right)^{p-1} \frac{| \Ga|^p}{(N | \Om|)^{p-1}}.
\end{equation}
\end{lem}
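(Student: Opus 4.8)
The plan is to pin down the constant $c$ by combining two integral identities and then to read off \eqref{eq:cap in terms isop} as a purely algebraic consequence. Throughout I rely on the improved regularity recorded in Remark \ref{rem:regularity}: $\Ga$ is of class $C^2$, $u$ is $C^{1,\al}$ up to $\Ga$ and $C^\infty$ near its regular points, and both $u=1$ and $|\na u|=c$ hold on $\Ga$ in the classical sense. Since $c>0$, the gradient $\na u$ does not vanish in a neighborhood of $\Ga$, and \eqref{eq:capnormagradmenodernormale} gives $\na u = -c\,\nu$ on $\Ga$.

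The first identity is immediate: by the second equality in \eqref{eq:caponboundary} together with the overdetermined condition \eqref{eq:overdetermination},
\[
\pCap_p(\Om) = \int_\Ga |\na u|^{p-1}\, dS_x = c^{p-1}\,|\Ga|.
\]
The second identity is of Pohozaev (Rellich) type and is designed to bring the volume $|\Om|$ into play. I would introduce the vector field
\[
W = |\na u|^{p-2}\,(x\cdot\na u)\,\na u - \frac1p\,|\na u|^p\,x ,
\]
and observe that a direct computation using only $\De_p u = 0$ yields, at every point where $\na u\neq 0$,
\[
\dv W = -\frac{N-p}{p}\,|\na u|^p .
\]

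Next I would integrate this identity over the annular region $B_R \setminus \ol\Om$ and let $R\to\infty$. By \eqref{eq:caponboundary} the bulk term converges to $-\frac{N-p}{p}\,\pCap_p(\Om)$; the flux of $W$ through $\pa B_R$ tends to $0$, since a scaling count based on the asymptotics \eqref{eq:passKV} shows it is of order $R^{-(N-p)/(p-1)}$. On $\Ga$, where the outward normal of the annulus is $-\nu$, substituting $\na u = -c\,\nu$ gives $W\cdot\nu = \frac{p-1}{p}\,c^p\,(x\cdot\nu)$, while $\int_\Ga (x\cdot\nu)\,dS_x = N|\Om|$ by the divergence theorem applied on $\Om$. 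Collecting the three contributions produces
\[
\frac{p-1}{p}\,c^p\,N|\Om| = \frac{N-p}{p}\,\pCap_p(\Om).
\]

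Eliminating $\pCap_p(\Om)$ between the two displayed relations is now algebra: they give $(N-p)\,c^{p-1}|\Ga| = (p-1)\,c^p\,N|\Om|$, hence $c = \frac{N-p}{p-1}\cdot\frac{|\Ga|}{N|\Om|} = \frac{N-p}{p-1}\,H_0$ by \eqref{def:H0}, which is \eqref{eq:valoreunu serrinesterno}; inserting this back into the first identity yields \eqref{eq:cap in terms isop}. The two genuinely delicate steps, which I expect to be the main obstacle, are the justification of the divergence identity for $W$ across the critical set of $u$, where $\De_p$ degenerates and $u$ is only $C^{1,\al}$ (handled by noting that $W$ is continuous since $p>1$, and integrating over the regular set, the critical set being Lebesgue-negligible), and the rigorous vanishing of the boundary flux at infinity, which nonetheless follows cleanly from the decay estimates in \eqref{eq:passKV}.
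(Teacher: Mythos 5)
Your proposal is correct and follows essentially the same route as the paper: the paper also combines $\pCap_p(\Om)=c^{p-1}|\Ga|$ from \eqref{eq:caponboundary} with the Rellich--Pohozaev identity \eqref{eq:Poho}, obtained by integrating the divergence-free combination built from $|\na u|^{p-2}\langle x,\na u\rangle\na u$ and $|\na u|^p x$ (your $W$ is $1/p$ times the paper's integrand in \eqref{eq:provaconforme}) over $B_R\setminus\ol\Om$ and letting $R\to\infty$ with the same decay count for the flux on $\pa B_R$. The boundary evaluation via $\na u=u_\nu\nu=-c\nu$ and the concluding algebra coincide with the paper's.
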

\proof
It is enough to use \eqref{eq:caponboundary} together with the following Rellich-Pohozaev-type identity:
\begin{equation}\label{eq:Poho}
(N-p)\int_{\RR^N \setminus \ol{\Om} } |\na u|^p \, dx = (p-1)\int_\Ga |\na u|^p <x, \nu> \, dS_x .
\end{equation}
In fact, by using \eqref{eq:overdetermination} in \eqref{eq:caponboundary} and \eqref{eq:Poho} we deduce respectively that
\begin{equation*}
\pCap_p (\Om) = c^{p-1} | \Ga| \quad \text{and} \quad \pCap_p (\Om)=\frac{p-1}{N-p} c^p N | \Om| ,
\end{equation*}
from which we get \eqref{eq:valoreunu serrinesterno} and \eqref{eq:cap in terms isop}.

Equation \eqref{eq:Poho} comes directly by taking the limit for $R \to \infty$ of the following integration by parts made on $B_R \setminus \ol{\Om} $ and noting that the integrals on $\pa B_R$ converge to zero due to the asymptotic going of $u$ at infinity given by \eqref{eq:cap-asymptotic u}:
\begin{multline}\label{eq:provaconforme}
(N-p)\int_{B_R \setminus \ol{\Om} } |\na u|^p \, dx = 
\\
p \int_\Ga |\na u|^{p-2} <x, \na u> u_\nu \, dS_x - \int_\Ga |\na u|^p <x, \nu> \, dS_x -
\\
 p \int_{\pa B_R } |\na u|^{p-2} <x, \na u> u_{\nu_{B_R}} \, dS_x + \int_{\pa B_R } |\na u|^p <x, \nu_{B_R}> \, dS_x .
\end{multline}
Equation \eqref{eq:Poho} is in fact proved just by recalling that 
\begin{equation}\label{eq:capgraddernormnormale}
\na u = u_\nu \, \nu \text{ on }\Ga.
\end{equation}
\endproof


\vspace{1pt}
\noindent
\textbf{The P-function.}
As last ingredient, we introduce the $P$-function
\begin{equation}
\label{def:P}
P= \frac{|\na u|^p}{u^{\frac{p(N-1)}{(N-p)}}}.
\end{equation}
Notice that in the radial case, i.e. if $\Om= B_R (x_0)$ is a ball of radius $R$ centered at the point $x_0$, we have that
\begin{equation}
\label{eq:esplicitaradiale1}
u(x) = \left( \frac{R}{|x  - x_0  |} \right)^{\frac{N-p}{p-1}}
\end{equation}
and thus
$$
P \equiv \left( \frac{N-p}{p-1} \right)^{p} R^{-p}.
$$

In \cite{GS} the authors have studied extensively the properties of the function $P$.
In particular, in \cite[Theorem 2.2]{GS} it is proved that $P$ satisfies the strong maximum principle, i.e. the function $P$ cannot attain a local maximum at an interior point of $\RR^N \setminus \ol{ \Om}$, unless $P$ is constant. We mention that this property for the case $p=2$ was first established in \cite{PP}.

\vspace{1pt}

Now that we collected all the ingredients, we are in position to give the proof of Theorem \ref{thm:Serrinesterno}.


\begin{proof}[Proof of Theorem \ref{thm:Serrinesterno}]

By \eqref{eq:cap-asymptotic u} it is easy to check that
\begin{equation*}
\lim_{|x| \rightarrow \infty} P(x)= \left( \frac{N-p}{p-1} \right)^{\frac{p(N-1)}{N-p}} \left( \frac{ \om_N }{ \pCap_p(\Om)} \right)^{\frac{p}{N-p}} ,
\end{equation*}
from which, by using \eqref{eq:cap in terms isop}, we get
\begin{equation}\label{eq:prova2}
\lim_{|x| \rightarrow \infty} P(x)= \left( \frac{N-p}{p-1} \right)^{p} \left( \frac{ \om_N \left( N | \Om| \right)^{p-1} }{ | \Ga|^p } \right)^{\frac{p}{N-p}}.
\end{equation}

Moreover, by recalling the boundary condition in 
\eqref{Problemcapacity}, \eqref{eq:overdetermination}, and \eqref{eq:valoreunu serrinesterno}
we can compute that
\begin{equation}\label{eq:prova}
P_{| \Ga} =  \left( \frac{N-p}{p-1} \right)^p  \left( \frac{| \Ga| }{ N | \Om| } \right)^p.
\end{equation}

By using the classical isoperimetric inequality (see, e.g., \cite{BZ})
\begin{equation}\label{eq:ISOPerimetrica}
\frac{| \Ga|^{\frac{N}{N-1}}}{N \om_N^{\frac{1}{N-1}}} \ge |\Om|,
\end{equation}
by \eqref{eq:prova2} and \eqref{eq:prova} it is easy to check that
$$
\lim_{|x| \rightarrow \infty} P(x) \le P_{| \Ga}.
$$
Hence, by the strong maximum principle proved in \cite[Theorem 2.2]{GS}, $P$ attains its maximum on $\Ga$ and thus we can affirm that
\begin{equation}\label{eq:00}
P_{\nu} \le 0 ,
\end{equation}
where, $\nu$ is still the outer unit normal with respect to $\Om$.
If we directly compute $P_{\nu}$ and we use \eqref{eq:capgraddernormnormale}, we find
\begin{equation}\label{eq:01}
P_\nu = p \, u^{- \frac{p(N-1)}{N-p}} |\na u|^{p-2}  \left\lbrace u_{\nu \nu} u_\nu - \frac{N-1}{N-p} \, |\na u|^2 \, \frac{u_\nu}{u} \right\rbrace.
\end{equation}
By the well known differential identity
\begin{equation*}
\De_p u = | \na u|^{p-2} \left\lbrace (p-1) u_{\nu \nu} + (N-1) H u_\nu \right\rbrace \quad \mbox{ on $\Ga$    }
\end{equation*}
and the $p$-harmonicity of $u$ we deduce that
\begin{equation}\label{eq:02}
u_{\nu \nu}= - \frac{N-1}{p-1} H u_\nu.
\end{equation} 
By combining \eqref{eq:00}, \eqref{eq:01}, and \eqref{eq:02} we get
\begin{equation*}
p (N-1)  u^{- \frac{p(N-1)}{N-p}} |\na u|^p  \left\lbrace \frac{- u_{\nu }}{(N-p) u} - \frac{H}{p-1} \right\rbrace \le 0,
\end{equation*}
from which, by using the fact that $u=1$ on $\Ga$, \eqref{eq:overdetermination}, \eqref{eq:capnormagradmenodernormale}, and \eqref{eq:valoreunu serrinesterno} we get
$$H_0 -H \le 0.$$
We can now conclude by using Theorem \ref{provaSBT} stated in the Introduction.
\end{proof}

\begin{rem}
{\rm
Since the solution of \eqref{Problemcapacity} in a ball is explicitly known, as a corollary of Theorem \ref{thm:Serrinesterno} we get that $u$ is spherically symmetric about the center $x_0$ of (the ball) $\Om$ and it is given by \eqref{eq:esplicitaradiale1} with $R= \frac{N-p}{(p-1)c}$.
}
\end{rem}

\begin{rem}\label{rem:Garofaoloasymptotic}
{\rm
As already mentioned before, in \cite{GS} a result slightly different from Lemma~\ref{lem:asymptoticcap} is used. In fact, in \cite[Theorem 3.1]{GS} it is proved, independently from the work \cite{KV}, that if $P$ takes its supremum at infinity, then the asymptotic expansion \eqref{eq:cap-asymptotic u} holds true and hence
$\lim_{|x| \to \infty} P(x)$ exists and it is given by \eqref{eq:prova2}; clearly, that result would be sufficient to complete the proof of Theorem \ref{thm:Serrinesterno} without invoking Lemma \ref{lem:asymptoticcap}.
}
\end{rem}

\begin{rem}
{\rm
In \cite{GS}, instead of the classical isoperimetric inequality, the authors use the nonlinear version of the isoperimetric inequality for the capacity mentioned in the Introduction stating that, for any bounded open set $A$, if $B_\rho$ denotes a ball such that $|B_\rho|= \frac{ \om_N }{N} \rho^N=|A|$,  it holds that
\begin{equation}\label{eq:Poincare}
\pCap_p (A) \ge \pCap_p (B_\rho) ,
\end{equation}
with equality if and only if $A$ is a ball (for a proof see, e.g., \cite{Ge}).

Since the $p$-capacity of a ball is known explicitly (see, e.g., \cite[Equation (4.7)]{GS}):
\begin{equation*}
\pCap_p(B_\rho)= \om_N \left( \frac{N-p}{p-1} \right)^{p-1} \rho^{N-p} ,
\end{equation*}
if we take $B_\rho$ such that $|B_\rho|= |\Om|$, for $\Om$ \eqref{eq:Poincare} becomes
\begin{equation}\label{eq:isop cap}
\pCap_p (\Om) \ge \om_N \left( \frac{N-p}{p-1} \right)^{p-1} \left( \frac{N |\Om|}{\om_N} \right)^{\frac{N-p}{N}}.
\end{equation}

Now we notice that, since \eqref{eq:cap in terms isop} holds, \eqref{eq:isop cap} is equivalent to the classical isoperimetric inequality \eqref{eq:ISOPerimetrica}.
In fact, if we put \eqref{eq:cap in terms isop} in \eqref{eq:isop cap}, it is easy to check that \eqref{eq:isop cap} becomes exactly \eqref{eq:ISOPerimetrica}.
}
\end{rem}




\section{The case \texorpdfstring{$1<p=N$}{1<p=N}: proof of Theorem \ref{thm:conforme}}\label{sec:casoconforme}

In the present section, we consider $u$ solution to the exterior problem \eqref{ConformalProblemcapacity}, \eqref{eq:overdetermination}, and $1<N$. 
In order to give the proof of Theorem \ref{thm:conforme}, we collect all the necessary ingredients in the following remark.

%
%
%

\begin{rem}
{\rm
(i)(On the regularity). We notice that the regularity results invoked in Remark \ref{rem:regularity} hold when $p=N$, too.

(ii)(Asymptotic expansion). By \eqref{eq:confasymppre}, the result of Kichenassamy and V\'eron (\cite[Remark 1.5]{KV}) applies also in this case and hence we have that \eqref{eq:passKV} holds with 
$$
\mu(x)= - \frac{ \ln{| x |} }{\om_N^{\frac{1}{N-1}}}.
$$
It is easy to check that also Identity \eqref{eq:capintegrperasymptotic} still holds (with $p$ replaced by $N$); by computing the limit in the right-hand side, and by using \eqref{eq:overdetermination} and the fact that $|\na u| = - u_\nu$ in the left-hand side, \eqref{eq:capintegrperasymptotic} leads to 
$$
c^{N-1} | \Ga|= \ga^{N-1},
$$
from which we deduce the following asymptotic expansion for $u$ at infinity:
\begin{equation}\label{eq:conformasymptoticexp}
u(x)= - c \left( \frac{|\Ga|}{\om_N} \right)^{\frac{1}{N-1}} \, \ln{|x|} + O(1).
\end{equation}
}
\end{rem}

We are ready now to prove Theorem \ref{thm:conforme}.

\begin{proof}[Proof of Theorem \ref{thm:conforme}]
We just find the analogous of the Rellich-Pohozaev-type identity \eqref{eq:provaconforme} when $p=N$; since $u$ is $N$-harmonic, now the vector field
$$X= N <x, \na u> | \na u|^{N-2} \na u - | \na u|^N x$$
is divergence-free and thus integration by parts leads to
\begin{equation}\label{eq:conformeintegrbyparts}
\int_{\Ga} <X, \nu> \, dS_x= \lim_{R \to \infty} \int_{\pa B_R} <X, \nu_{B_R}> \, dS_x.
\end{equation}

For the left-hand side in \eqref{eq:conformeintegrbyparts}, by using that $\na u = u_\nu \, \nu$ on $\Ga$ and \eqref{eq:overdetermination} we immediately find that
$$
\int_{\Ga} <X, \nu> \, dS_x= (N-1) c^N N|\Om|.
$$
For the right-hand side in \eqref{eq:conformeintegrbyparts}, by the asymptotic expansion \eqref{eq:conformasymptoticexp} we easily compute that
$$
\lim_{R \to \infty} \int_{\pa B_R} <X, \nu_{B_R}> \, dS_x = (N-1) c^N \frac{|\Ga|^{\frac{N}{N-1}}}{\om_N^{\frac{1}{N-1}}}.
$$
Thus, \eqref{eq:conformeintegrbyparts} becomes
$$
N |\Om| = \frac{|\Ga|^\frac{N}{N-1}}{\om_N^{\frac{1}{N-1}}},
$$
that is the equality case of the classical isoperimetric inequality. Hence $\Om$ must be a ball.
\end{proof}

\begin{rem}
{\rm
As a corollary of Theorem \ref{thm:conforme} we get that $u$ is spherically symmetric about the center $x_0$ of (the ball) $\Om$ and it is given by
$$
u(x)= - c \left( \frac{|\Ga|}{\om_N} \right)^{\frac{1}{N-1}} \, \ln{|x - x_0|} ,
$$
up to an additive constant.
}
\end{rem}

\section{The interior problem: proof of Theorem \ref{thm:Serrininterno}}\label{sec:Interior}

In the present section $u$ is a weak solution to \eqref{Pdelta}, \eqref{eq:interno-overdetermination}, and $1<p<N$.
By a weak solution of \eqref{Pdelta}, \eqref{eq:interno-overdetermination} we mean a function $u \in W^{1,p}_{loc} \left( \Om \setminus \left\lbrace 0 \right\rbrace \right)$ such that
$$
\int_{\Om} |\na u|^{p-2} \, \na u \cdot \na \phi \, dx=0
$$
for every $\phi \in C_0^\infty \left( \Om \setminus \left\lbrace 0 \right\rbrace \right)$ and satisfying the boundary condition in \eqref{Pdelta} and the one in \eqref{eq:interno-overdetermination} in the weak sense explained after Theorem \ref{thm:Serrinesterno}.

In order to give our proof of Theorem \ref{thm:Serrininterno}, we collect all the necessary ingredients in the following remark.

\begin{rem}
{\rm
(i)(On the regularity). The regularity results presented for the exterior problem in
Remark \ref{rem:regularity} hold in the same way also for the interior problem, so that, reasoning as explained there, we can affirm that $u$ can be extended to a $C^2$-function in a neighborhood of $\Ga$, $\Ga$ is of class $C^2$, and $u \in C^{1, \al}_{loc} (\ol{\Om})$.

(ii)(Explicit value of $K$ in \eqref{Pdelta}). It is easy to show that the normalization constant $K$ that appears in \eqref{Pdelta} must take the value
$$
K= |\Ga |,
$$
to be compatible with the overdetermined condition \eqref{eq:interno-overdetermination} (see for example \cite{EP}).

(iii)(Asymptotic expansion). As a direct application of \cite[Theorem 1.1]{KV}, we deduce that the asymptotic behaviour of the solution $u$ of \eqref{Pdelta} near the origin is given by
\begin{equation}
\label{asymptoticu}
u(x) = \frac{p-1}{N-p} \left( \frac{|\Ga|}{ \om_N} \right)^{\frac{1}{p-1}} \, |x|^{- \frac{N-p}{p-1}} + o(|x|^{- \frac{N-p}{p-1}}).
\end{equation}
We mention that this expansion has been used also in \cite{EP}.

}
\end{rem}

\vspace{1pt}
\noindent
\textbf{The P-function.}
We consider again the P-function defined in \eqref{def:P}; by virtue of the weak $p$-harmonicity of $u$ in $\Om \setminus \lbrace 0 \rbrace$, \cite[Theorem 2.2]{GS} ensures that the strong maximum principle holds for $P$ in $\Om \setminus \lbrace 0 \rbrace$.


\vspace{1pt}


We are ready now to prove Theorem \ref{thm:Serrininterno}; as announced in the Introduction, the proof uses arguments similar to those used in the proof of Theorem \ref{thm:Serrinesterno}.


\begin{proof}[Proof of Theorem \ref{thm:Serrininterno}]
As already noticed in \cite{EP}, concerning the existence of a weak solution to the overdetermined problem \eqref{Pdelta}, \eqref{eq:interno-overdetermination}, the actual value of the function $u$ on $\Ga$ is irrelevant, since any function differing from $u$ by a constant is $p$-harmonic whenever $u$ is.
Thus, let us now fix the constant $c$ that appears in \eqref{Pdelta} as
\begin{equation}\label{def:cSBT}
c= \frac{p-1}{N-p} \left( \frac{N |\Om|}{|\Ga|} \right);
\end{equation}
with this choice and by recalling \eqref{eq:interno-overdetermination} we have that
$$
P_{| \Ga} = \left( \frac{p-1}{N-p} \right)^{- \frac{p(N-1)}{N-p}} \left( \frac{N| \Om|}{| \Ga|} \right)^{- \frac{p(N-1)}{N-p}}.
$$
Moreover, by using \eqref{asymptoticu} we find also that
$$
\lim_{|x| \rightarrow 0} P(x)= \left( \frac{p-1}{N-p} \right)^{- \frac{p(N-1)}{N-p}}\left( \frac{|\Ga|}{\om_N} \right)^{- \frac{p}{N-p}}.
$$
By the isoperimetric inequality \eqref{eq:ISOPerimetrica} it is easy to check that
$$
\lim_{|x| \rightarrow 0} P(x) \le P_{| \Ga} ,
$$
and hence, by the maximum principle proved in \cite[Theorem 2.2]{GS}, we realize that $P$ attains its maximum on $\Ga$. We thus have that
\begin{equation}\label{eq:MPtoP}
0\le P_{\nu}.
\end{equation}
By a direct computation, with exactly the same manipulations used for the exterior problem in the proof of Theorem \ref{thm:Serrinesterno}, we find that
\begin{equation}\label{eq:PnuSBT}
P_{\nu} = p (N-1)  u^{- \frac{p(N-1)}{N-p}} |\na u|^p  \left\lbrace \frac{- u_{\nu}}{(N-p) u} - \frac{H}{p-1} \right\rbrace.
\end{equation}

By coupling \eqref{eq:MPtoP} with \eqref{eq:PnuSBT}, we can deduce that
\begin{equation*}
H \le \frac{p-1}{N-p} \left(\frac{- u_\nu}{u}\right),
\end{equation*}
that by using \eqref{def:cSBT}, \eqref{Pdelta}, \eqref{eq:interno-overdetermination}, and the fact that $|\na u|= - u_\nu$ on $\Ga$, leads to
\begin{equation}\label{eq:penultima SBT}
H \le H_0,
\end{equation}
where $H_0$ is the constant defined in \eqref{def:H0}.

Since $\Om$ is star-shaped with respect to a point $z \in \Om$ (possibly distinct from $0$), we have that $<(x - z) ,\nu >$ is non-negative on $\Ga$.
Thus, multiplying \eqref{eq:penultima SBT} by $<(x - z) ,\nu>$,
%
%
and integrating over $\Ga$, we get
\begin{equation*}
\int_\Ga H <(x - z ) ,\nu> \, dS_x \le |\Ga|.
\end{equation*}
By recalling the regularity of $\Ga$ and {\it Minkowski's identity}
\begin{equation*}
\int_\Ga H <(x - z ) ,\nu> \, dS_x = |\Ga| ,
\end{equation*}
we deduce -- as already noticed in \cite[Proof of Theorem 1.1]{GS} -- that the equality sign must hold in \eqref{eq:penultima SBT}, that is
$$
H \equiv H_0.
$$
Thus, the conclusion follows by the classical Alexandrov's Soap Bubble Theorem (\cite{Al}) or, if we want, again by Theorem \ref{provaSBT}.
\end{proof}

\begin{rem}
{\rm
As a corollary of Theorem \ref{thm:Serrininterno} we get that $u$ is spherically symmetric about the center $0$ of (the ball) $\Om$ and it is given by
$$
u(x) = \frac{p-1}{N-p} \left( \frac{|\Ga|}{ \om_N} \right)^{\frac{1}{p-1}} \, |x|^{- \frac{N-p}{p-1}} ,
$$
up to an additive constant.
}
\end{rem}

\section*{Acknowledgements}
The author wishes to thank Rolando Magnanini for his constructive criticism and for pointing out the paper \cite{AR}.
The author also wishes to thank Nicola Garofalo for his kind interest in this work, and Chiara Bianchini, Andrea Colesanti, and Paolo Salani for bringing to his attention the references \cite{EP}, \cite{CoS}, and \cite{CC}.

The paper was partially supported by Gruppo Nazionale Analisi Matematica Probabilit\`a e Applicazioni (GNAMPA) of the Istituto Nazionale di Alta Matematica (INdAM).

\end{document}